\newcommand{\Real}{\mathbb R}
\newcommand{\eps}{\varepsilon}
\newcommand{\one}[1]{\mathbf{1}_{\{#1\}}}
\renewcommand{\P}{\mathsf{P}}
\renewcommand{\E}{\mathsf{E}}
\newcommand{\overbar}[1]{\mkern 1.5mu\overline{\mkern-1.5mu#1\mkern-1.5mu}\mkern 1.5mu}
\newcommand{\ben}{\begin{enumerate}}
\newcommand{\een}{\end{enumerate}}
\def\l{\lambda}
\def\m{\mu}
\def\eqs{\begin{eqnarray*}}
\def\ens{\end{eqnarray*}}
\def\eqa{\begin{eqnarray}}
\def\ena{\end{eqnarray}}
\shorttitle{Persistence of small noise and random initial conditions} 
\begin{document}

\title{Persistence of small noise and random initial conditions} 

\authorone[Monash University]{J. Baker} 

\addressone{School of Mathematical Sciences,
Monash University, Monash, VIC 3800, Australia;
Email address: jeremy.baker@monash.edu} 

\authortwo[The Hebrew University of Jerusalem]{P. Chigansky} 

\addresstwo{Department of Statistics,
The Hebrew University,
Mount Scopus, Jerusalem 91905,
Israel; Email address: Pavel.Chigansky@mail.huji.ac.il} 

\authorthree[Monash University]{K. Hamza} 

\addressthree{School of Mathematical Sciences,
Monash University, Monash, VIC 3800, Australia;
Email address: kais.hamza@monash.edu} 

\authorfour[Monash University]{F.C.Klebaner} 

\addressfour{School of Mathematical Sciences,
Monash University, Monash, VIC 3800, Australia;
Email address: fima.klebaner@monash.edu} 

\begin{abstract}
The effect of small noise in a smooth dynamical system is negligible on any finite time interval.
Here we study situations when it persists on intervals increasing to infinity. Such asymptotic regime
occurs when the system starts from initial condition, sufficiently close to an unstable fixed point.
In this case, under appropriate scaling, the trajectory converges to solution of the unperturbed
system, started from a certain {\em random} initial condition. In this paper we consider the case
of one dimensional diffusions on the positive half line, which often arise as scaling limits in
population dynamics.
\end{abstract}

\keywords{fluid approximation, small noise, dynamical systems} 


\section{Introduction}

In this paper we study a new approximation for the stochastic process, generated by a nonlinear dynamical system
started in the vicinity of its unstable fixed point. The processes we consider can be represented as deterministic dynamics
perturbed by small noise. The well known results of Kurtz \cite{Kurtz} in the context of Markov jump processes or that of
Freidlin and Wentzell \cite{FW} in the context of diffusions, state  that the effect of small noise is negligible on any finite time
interval. This is  known as the fluid limit approximation, which implies that under appropriate conditions,
the small noise limit of the stochastic process solves the appropriate ordinary differential equation.

We are interested in the situation when the stochastic process starts near an unstable fixed point, such as zero,
in which case the usual fluid approximation results in zero, and thus is uninformative. To give a more adequate approximation
we let intervals increase to infinity. This yields fluid approximation with a {\em random} initial condition. The randomness comes as the random variable $W$, itself being the limit of the scaled linearized stochastic system, now  as time goes to infinity. The new initial condition is given by the suitable transformation of $W$, which is derived from the nonlinear deterministic dynamics in the fluid approximation.
Such approximation with random initial condition was obtained in \cite{BCK} for some nonlinear Birth-Death processes and in \cite{CJK} for
a discrete time model of Polymerase Chain Reaction (PCR).

In this paper we consider the case of one dimensional diffusions.  Such processes arise, for example, as approximations to the Wright-Fisher
model from evolutionary biology. Recently heuristics for random initial conditions was given in \cite{ML}, arguing
for Gamma type distribution, that is, a sum of independent exponentials. Our theory yields the random Poisson sum of similar exponentials.

The relevant approximation philosophy can be traced back to the works of Kendall \cite{Kendall56} and
Whittle \cite{Whittle55} in the context of Markovian SIR epidemic process. It was observed that
such processes should behave much like linear branching near the unstable fixed point and then should look more and more
like solutions to the deterministic equations. However rigorous implementation of the Kendall--Whittle heuristics turns out to be a
nontrivial matter, which requires quite different and model-specific techniques.

The main difficulty while working with dynamical systems on increasing time intervals
is that Gr\"{o}nwall's inequality, being the most common tool in this kind of analysis, ceases to be effective.
A more efficient approximation can be constructed by means of a two-stage procedure.
The whole time interval is split into two parts, on which the solution of the perturbed system is approximated in different ways:
first it is coupled to the linearized stochastic dynamics and then to its extrapolation
along the deterministic flow, generated by the unperturbed system.

The key ingredient of the method is the coupling, tailored to the particular type of dynamics on the case-to-case basis.
For the density dependent Birth-Death processes as in \cite{BCK}, this was done by means of
an auxiliary collection of Poisson processes, properly synchronized with jumps of the original system. For the one-dimensional
diffusions this approach is no longer feasible and, instead, we realize the coupling using the Yamada-Watanabe
type approximation by a linear process, driven by the same Brownian motion.

\section{The main result}

Consider the stochastic differential equation (s.d.e.) on $\Real_+$
\begin{equation}\label{main}
dX^\eps_t =  f(X^\eps_t) dt + \sqrt{\eps \sigma(X^\eps_t)} dB_t,\quad t\ge 0
\end{equation}
where $B_t$ is standard Brownian motion, $\eps>0$ is a small parameter and $f:\Real_+\mapsto \Real$ and $\sigma:\Real_+\mapsto \Real_+$
are twice continuously differentiable functions with bounded second derivative.
We assume that both functions vanish at zero, $f(0)=\sigma(0)=0$, and have positive slopes $f'(0)>0$ and $\sigma'(0)>0$, which makes
zero an unstable fixed point of \eqref{main} as well as of the ordinary differential equation (o.d.e.), obtained by removing the stochastic perturbation in \eqref{main}:
\begin{equation}\label{detmain}
\frac{dx_t}{dt}=f(x_t),\quad t\ge 0.
\end{equation}
In addition diffusion coefficient $\sigma(\cdot)$ and its derivative are assumed to be bounded
and $f(\cdot)$ satisfies the following drift condition:
\begin{equation}\label{drift}
(y-x)\big(f(y)-f(x)\big)\le f'(0) (y-x)^2, \quad x,y\in \Real_+.
\end{equation}

Smoothness of the coefficients and the drift condition \eqref{drift} are sufficient for existence of the unique strong solution
of \eqref{main} for any initial point $X^\eps_0\in \Real_+$ (see, e.g., \cite{Fimabook}).
Similarly the deterministic equation \eqref{detmain} admits unique continuous solution subject to any $x_0\in \Real_+$.
Henceforth we denote by $\phi_t(x)$ the flow generated by \eqref{detmain}.

The theory of small random perturbations, e.g. Freidlin and Wentzell \cite{FW},
asserts that the effect of noise on any {\em fixed} time interval $[0,T]$ is negligible as $\varepsilon\to 0$.

\begin{thm}\label{Kurtz}
Let  $X^\varepsilon_t$  satisfy \eqref{main}  and  $X^\eps_0\xrightarrow[\eps\to 0]{\P} x_0\in \Real_+$, then for any $T$
$$
\sup_{t\le T}|X^\varepsilon_t-x_t|\xrightarrow[\eps\to 0]{\P} 0,
$$
where $x_t$ is the solution of \eqref{detmain} subject to the initial condition $x_0$.
\end{thm}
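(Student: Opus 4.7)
The plan is to control $Z^\eps_t := X^\eps_t - x_t$ by means of It\^o's formula applied to $(Z^\eps_t)^2$, exploiting the one-sided Lipschitz estimate \eqref{drift} as a substitute for the global Lipschitz bound that is unavailable here. Since
\begin{equation*}
dZ^\eps_t = \bigl[f(X^\eps_t) - f(x_t)\bigr]\,dt + \sqrt{\eps\sigma(X^\eps_t)}\,dB_t,
\end{equation*}
It\^o's formula gives
\begin{equation*}
d(Z^\eps_t)^2 = 2 Z^\eps_t\bigl[f(X^\eps_t) - f(x_t)\bigr]\,dt + \eps\sigma(X^\eps_t)\,dt + 2 Z^\eps_t\sqrt{\eps\sigma(X^\eps_t)}\,dB_t.
\end{equation*}
By \eqref{drift} the first drift term is bounded by $2 f'(0)(Z^\eps_t)^2\,dt$, and since $\sigma$ is bounded (by some constant $K$) the second is bounded by $\eps K\,dt$.

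Next I would stop at $\tau_n := \inf\{t:X^\eps_t\ge n\}$ to guarantee integrability (the stopped martingale is then honest and has zero mean), take expectations, and obtain
\begin{equation*}
\E(Z^\eps_{t\wedge\tau_n})^2 \le \E(Z^\eps_0)^2 + 2 f'(0)\int_0^t \E(Z^\eps_{s\wedge\tau_n})^2\,ds + \eps K T.
\end{equation*}
Gronwall followed by Fatou as $n\to\infty$ yields $\E(Z^\eps_t)^2 \le [\E(Z^\eps_0)^2 + \eps K T] e^{2 f'(0) T}$ for $t\le T$. To upgrade from pointwise-in-$t$ to the supremum, I take the sup of the raw inequality $(Z^\eps_t)^2 \le (Z^\eps_0)^2 + 2 f'(0)\int_0^t (Z^\eps_s)^2\,ds + M^\eps_t + \eps K T$, apply Burkholder--Davis--Gundy to bound $\E\sup_{s\le t}|M^\eps_s|$ by a constant times $\sqrt{\eps}\bigl(\int_0^t h(s)\,ds\bigr)^{1/2}$, where $h(t):=\E\sup_{s\le t}(Z^\eps_s)^2$, and then use Young's inequality to absorb a fraction of $h(t)$ to the left-hand side. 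A second application of Gronwall to the resulting linear integral inequality for $h$ produces $h(T) \le C_T\bigl[\E(Z^\eps_0)^2 + \eps\bigr]$.

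The hypothesis is only $X^\eps_0 \xrightarrow{\P} x_0$, not convergence in $L^2$, so I would handle the initial condition by truncation: replace $X^\eps_0$ by $X^\eps_0\wedge M$ for a large constant $M$; by the probability convergence the two agree with probability at least $1-\delta$ for all small $\eps$, while the truncated initial datum converges in $L^2$ by bounded convergence. Markov's inequality then turns the $L^2$-bound on the supremum into the asserted convergence in probability.

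The chief obstacle is the absence of a global Lipschitz bound on $f$, which makes the naive Gronwall argument applied directly to $|Z^\eps_t|$ fail. The drift condition \eqref{drift} is precisely what lets the It\^o--Gronwall argument on $(Z^\eps_t)^2$ go through without having to truncate $X^\eps_t$ itself, and once this step is in place the remaining ingredients (BDG, Doob, Young and the initial-condition truncation) are entirely routine.
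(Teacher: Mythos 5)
The paper does not actually prove Theorem \ref{Kurtz}; it is quoted as a known consequence of the Freidlin--Wentzell/Kurtz theory, so there is no in-paper proof to compare against line by line. Your argument is correct and self-contained, and it is in fact the same device the authors deploy later for their own Lemma on $\Phi_{t_c,t_1}(X^\eps_{t_c})-\phi_{t_c,t_1}(X^\eps_{t_c})$: It\^o's formula on the squared difference, the one-sided estimate \eqref{drift} in place of a global Lipschitz bound, boundedness of $\sigma$ for the $\eps\sigma(X^\eps_t)\,dt$ term, and Gr\"onwall. What you add beyond the paper's internal computation is exactly what the citation is covering for them: the upgrade from a pointwise-in-$t$ second-moment bound to a bound on $\E\sup_{t\le T}(Z^\eps_t)^2$ via Burkholder--Davis--Gundy and Young, plus the truncation of the initial condition to reconcile convergence in probability with the $L^2$ machinery. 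Two small points to make explicit when writing this up: (i) before running Gr\"onwall on $h(t)=\E\sup_{s\le t}(Z^\eps_s)^2$ you need $h(T)<\infty$ a priori, which follows from $f(x)\le f'(0)x$ (a consequence of \eqref{drift} with $x=0$), $X^\eps_t\ge 0$ and bounded $\sigma$, but should be recorded, e.g.\ by carrying the localization at $\tau_n$ through the BDG step as well; (ii) the paper applies this theorem after the time shift with the \emph{random} initial point $H(W)$, so your truncation step should be phrased for a random limit $x_0$ (choose $M$ with $\P(x_0\ge M)$ small), which costs nothing. With those caveats the proposal is sound.
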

Since zero is a fixed point of the limit dynamics \eqref{detmain}, this theorem implies that the
solution of \eqref{main}, started from a small positive initial condition $X^\eps_0=\eps>0$, converges to zero
on any fixed bounded interval
$$
\sup_{t\le T}\big|X_t^\eps\big| \xrightarrow[\eps\to 0]{\P} 0, \qquad \forall T\ge 0.
$$
On the other hand, since the fixed point is unstable and the initial condition is nonzero,
with positive probability, the trajectory $X^\eps_t$ is pushed out of the vicinity of the origin and, after sufficiently
large period of time, may reach a significant magnitude.  Being missed out by Theorem \ref{Kurtz}, this effect is captured
by the following result:

\begin{thm}\label{mainT} Let $X^\eps_t$ satisfy \eqref{main} subject to $X^\eps_0=\eps>0$ and define
$T_\eps := \dfrac 1 {f'(0)} \log \dfrac 1 \eps$.
Then for any $T>0$,
\begin{equation}\label{newapp}
\sup_{t\in [0,T]}\big| X^\eps_{T_{\eps} + t} -x_t \big| \xrightarrow[\eps \to 0]{\P} 0,
\end{equation}
where $x_t$ is the solution of \eqref{detmain} subject to the initial condition
$
x_0 = H(W)$. Function $H(x)$ is the limit of the scaled flow
\begin{equation}\label{Hxlim}
H(x) =  \lim_{t\to\infty} \phi_t\big(xe^{-f'(0) t}\big), \quad x\ge 0
\end{equation}
and the random variable $W$ is the a.s. martingale limit
$$
W :=\lim_{t\to\infty} e^{-f'(0)t} Y_t
$$
of the solution to the linearized system
\begin{equation}\label{Ylin}
Y_t =  1 + \int_0^t f'(0) Y_s ds + \int_0^t \sqrt{\sigma'(0) Y_s} dB_s.
\end{equation}

\end{thm}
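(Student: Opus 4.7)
\emph{Strategy.} I would implement the two-stage coupling described in the introduction via an intermediate time $s_\eps := T_\eps - M$, where $M>0$ is a large constant sent to infinity \emph{after} $\eps \to 0$. On the initial segment $[0, s_\eps]$ I would couple $X^\eps$ to the scaled linearisation $\eps Y_t$, with $Y_t$ solving \eqref{Ylin} driven by the \emph{same} Brownian motion $B$; note that $\eps Y_t$ satisfies an SDE of the form \eqref{main} with $f$ and $\sigma$ replaced by their linearisations at the origin. On the final segment $[s_\eps, T_\eps + T]$, of fixed length $M+T$, I would invoke the classical fluid limit (Theorem~\ref{Kurtz}) to transport the value along the deterministic flow. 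Taking $M \to \infty$ at the very end produces $H(W)$ through the defining limit \eqref{Hxlim}.

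\emph{Linear coupling.} The core lemma would be: for every $\eta>0$,
\[
\P\Bigl(\sup_{t \le s_\eps} |X^\eps_t - \eps Y_t| > \eta\Bigr) \xrightarrow[\eps\to 0]{} 0.
\]
I would attack it via the Yamada--Watanabe smooth approximation to $|\cdot|$ applied to $Z^\eps_t := X^\eps_t - \eps Y_t$, which satisfies
\[
dZ^\eps_t = \bigl[f(X^\eps_t) - f'(0)\eps Y_t\bigr]\,dt + \sqrt{\eps}\bigl[\sqrt{\sigma(X^\eps_t)} - \sqrt{\sigma'(0)\eps Y_t}\bigr]\,dB_t.
\]
The one-sided inequality \eqref{drift} absorbs the leading linear drift difference $f'(0)(X^\eps_t - \eps Y_t)$ without Gr\"onwall; the Taylor remainder $f(x)-f'(0)x = O(x^2)$ and the H\"older remainder of $\sqrt{\sigma(\cdot)}$ are small on $\{X^\eps_t \vee \eps Y_t \le \delta\}$. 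I would therefore introduce the stopping time $\tau^\eps_\delta := \inf\{t: X^\eps_t \vee \eps Y_t > \delta\}$, run the Yamada--Watanabe bookkeeping on $[0, \tau^\eps_\delta \wedge s_\eps]$, and then remove the stopping by showing $\P(\tau^\eps_\delta \le s_\eps) \to 0$ once $\delta$ is chosen large with respect to $e^{-f'(0)M}W$, using $\eps Y_{s_\eps} = e^{-f'(0)M}(e^{-f'(0)s_\eps} Y_{s_\eps}) \to e^{-f'(0)M}W$ a.s.

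\emph{Identification of the limit.} Granting the coupling, at time $s_\eps$ the martingale convergence $e^{-f'(0)t}Y_t \to W$ gives $X^\eps_{s_\eps} \to e^{-f'(0)M} W$ in probability. Writing $\tilde X^\eps_u := X^\eps_{s_\eps + u}$ for $u \in [0, M+T]$ recasts this segment as an SDE of the form \eqref{main} driven by the Brownian motion $B_{s_\eps+\cdot}-B_{s_\eps}$ and started from the random, convergent initial value $X^\eps_{s_\eps}$. Theorem~\ref{Kurtz} then yields
\[
\sup_{t\in[0,T]}\bigl|X^\eps_{T_\eps + t} - \phi_{M+t}\bigl(e^{-f'(0)M} W\bigr)\bigr| \xrightarrow[\eps\to 0]{\P} 0.
\]
By the flow identity $\phi_{M+t} = \phi_t \circ \phi_M$ and definition \eqref{Hxlim}, $\phi_M(e^{-f'(0)M}W) \to H(W)$ a.s.\ as $M \to \infty$; continuity of $\phi_t$ in the initial value promotes this to $\phi_t(H(W)) = x_t$ uniformly on $[0,T]$. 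A standard diagonal argument in $(\eps, M)$ — choose $M$ large to make the deterministic error small, then $\eps$ small for fixed $M$ — concludes \eqref{newapp}.

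\emph{Main obstacle.} The genuine difficulty is the coupling lemma, because $s_\eps \to \infty$ makes Gr\"onwall estimates divergent. Control must come entirely from the \emph{one-sided} drift inequality \eqref{drift}, which contributes nothing expansive after the Yamada--Watanabe transformation, combined with the fact that on the stopped region both $X^\eps$ and $\eps Y$ are genuinely of order $\eps e^{f'(0)t}$, making the quadratic Taylor remainders integrable against the Yamada--Watanabe mollifier up to the exponentially long horizon $s_\eps$. Calibrating the threshold $\delta$ against the asymptotic scale of $\eps Y_{s_\eps}$ so that these two requirements are simultaneously met is the delicate step; the rest is bookkeeping.
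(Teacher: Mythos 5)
Your overall architecture (linear coupling near the origin, then deterministic transport, then identify $H(W)$ through \eqref{Hxlim}) is the right one and matches the paper's in spirit, and your second stage — Theorem \ref{Kurtz} on the fixed-length window $[T_\eps-M,\,T_\eps+T]$ followed by $M\to\infty$ and a diagonal argument — is sound. The gap is in the coupling lemma, specifically in the sentence ``the one-sided inequality \eqref{drift} absorbs the leading linear drift difference without Gr\"onwall.'' Condition \eqref{drift} is a one-sided Lipschitz bound with constant $f'(0)=a>0$, not $0$: it yields $h'_\eps(Z^\eps_t)\bigl(f(X^\eps_t)-f(\eps Y_t)\bigr)\le a|Z^\eps_t|$, so after the Yamada--Watanabe step you are still left with the term $a\int_0^t \E|Z^\eps_s|\,ds$ and Gr\"onwall is unavoidable. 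Over your horizon $s_\eps=T_\eps-M$ the resulting factor is $e^{as_\eps}=\eps^{-1}e^{-aM}$, and it multiplies error terms (the mollifier's $\sqrt{\eps'}$, the Taylor remainders $\int_0^{s_\eps}\E(\eps Y_s)^2ds\asymp e^{-2aM}$, the $J$-term) that are \emph{not} $o(\eps)$; the product diverges. This is not a bookkeeping issue but the heart of the problem: the fixed point is unstable, so the difference of two trajectories is genuinely amplified by $e^{at}$, and no choice of the threshold $\delta$ or of the mollifier parameter rescues a sup-norm estimate over an interval of length $\tfrac1a\log\tfrac1\eps$. (A secondary, fixable point: you cannot choose the deterministic level $\delta$ ``large with respect to $e^{-aM}W$,'' since $W$ is unbounded; you need a maximal inequality for the supermartingale $e^{-at}X^\eps_t/\eps$ and a union bound.)

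The paper avoids exactly this trap by never proving a uniform coupling on the growing interval. It establishes the coupling $\eps^{-1}X^\eps_t\to Y_t$ only at \emph{fixed} times (Lemma \ref{L1}, where the Gr\"onwall factor $e^{at}$ is harmless), and then transfers the information to the growing time $t_c=\tfrac{c}{a}\log\tfrac1\eps$ not through the trajectories themselves but through the damped martingale representation of $W^\eps_{t}=e^{-at}\eps^{-1}X^\eps_{t}$: the factors $e^{-as}$ inside the integrals offset the exponential growth, and the fixed-time convergence enters only via dominated convergence. Crucially, $t_c$ is taken to be a \emph{fraction} $c\in(\tfrac12,1)$ of $T_\eps$ rather than $T_\eps-M$: the constraint $c<1$ keeps the linearization error $\eps^{1-c}$ small on the first stage, while $c>\tfrac12$ keeps the Gr\"onwall factor $\eps\, e^{2a(t_1-t_c)}=\eps^{2c-1}$ small on the deterministic-extrapolation stage, and the limit $H(W)$ then emerges in a single pass because $t_1-t_c\to\infty$ automatically, with no outer limit in $M$ needed. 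To repair your proof you would have to replace the sup-norm coupling by a statement about $e^{-at}X^\eps_t/\eps$ at the intermediate time, which is essentially the paper's argument.
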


\begin{rem}\

\medskip
\noindent
{\bf a)} Both random variable $W$ and function $H(\cdot)$, arising in the limit, admit explicit characterizations.
As recalled in Section \ref{sec-Fbd} below, $W$ has the compound Poisson distribution with rate $2a$ and exponentially
distributed jumps with mean $1/(2a)$. Function $H(\cdot)$ admits explicit expression \eqref{Gdef}, elaborated in Proposition
\ref{lemH}.

\medskip
\noindent
{\bf b)} Approximation \eqref{newapp} reveals that, when observed at a suitably increasing sequence of times, the trajectory of \eqref{main}
started near the unstable fixed point, converges to the solution of the same deterministic system \eqref{detmain},
as in  Theorem \ref{Kurtz}, but this time, with a random initial condition. Note that $\P(x_0=0)=\P(W=0)=e^{-2a}>0$ and hence
the limiting trajectory can be zero with positive probability. This corresponds to the event on which the process $X^\eps_t$
is absorbed at zero in a finite time. On the event $\{W>0\}$, the trajectories converge to a nontrivial curve,
whose initial point is random. This type of randomness was observed in biological models of sweeps
(see, e.g., \cite{ML}).
\end{rem}

\begin{example}[Wright-Fisher diffusion with selection]
The s.d.e.
$$
dX^\eps_t = aX^\eps_t (1 - X^\eps_t) dt + \sqrt{\eps} \sqrt{ X^\eps_t(1-X^\eps_t)} dB_t,
$$
fits the above framework with $f(x)= ax(1-x)$ and $\sigma(x)=x(1-x)$.
This equation describes evolution of an allele frequency in a population and is known as the
Wright-Fisher diffusion with selection coefficient $a$.
It has two absorbing states, 0 and 1, so that for $X^\eps_0\in [0,1]$ the trajectory is confined to the unit interval,
$
X^\eps_t\in [0,1]
$
for all $t\ge 0$. In particular, all the assumptions of Theorem \ref{mainT} are satisfied: $f(\cdot)$ and $\sigma(\cdot)$
can be defined arbitrarily outside the state space $[0,1]$ and hence their derivatives can be chosen bounded, etc.

Using the   expression for the deterministic flow
$$
\phi_{t}(x) = \frac{xe^{at}}{1-x+xe^{at}},
$$
it follows that
$$H(x) =  \lim_{t\to\infty} \phi_t(xe^{-at})=\frac{x}{1+x}.$$
Hence the random initial condition is given by
$$
x_0   =\frac W{W+1}.
$$
\hfill $\blacksquare$
\end{example}

\begin{example}[Balancing selection model]
The s.d.e.
$$
dX^\eps_t = aX^\eps_t (1 - X^\eps_t)(1 - 2 X^\eps_t) dt + \sqrt{\eps} \sqrt{ X^\eps_t(1-X^\eps_t)} dB_t,
$$
fits the above framework with $f(x)= ax(1-x)(1 - 2 x)$ and $\sigma(x)=x(1-x)$.
The fluid limit is given by the o.d.e.
\begin{equation*}
\frac{dx_t}{dt}=ax_t(1-x_t)(1-2x_t),\quad t\ge 0,
\end{equation*}
which generates the flow
\begin{equation*}
\phi_t(x) = \frac{1}{2}- \frac{1}{2}   \frac{1-2 x}{\sqrt{4 x (1-x)(e^{at}-1) +1}},\quad x \in (0,\tfrac 1 2).
\end{equation*}
It now follows that
$$H(x) =  \lim_{t\to\infty} \phi_t(xe^{-at})=\frac{1}{2}- \frac{1}{2} \frac{1}{\sqrt{4 x + 1 }}.$$
Hence the random initial condition in Theorem \ref{mainT} for this model is given by
$$
x_0 =  \frac{1}{2}- \frac{1}{2}   \frac{1}{\sqrt{4 W + 1 }}.
$$
\hfill $\blacksquare$
\end{example}

\section{Previous Results}

In this section we describe the progression of recent research into the question of approximation of populations started
near an unstable fixed point.

\subsection{Random time shift}

An important step in rigorous realization of ideas of  Kendall \cite{Kendall56} and
Whittle \cite{Whittle55}  was done in \cite{BHKK15}  for nonlinear Birth-Death processes.
The essence of this result is that the stochastic system is similar to the approximating deterministic one except for the {\it random} time shift.  To explain the setting, consider the ``bare bones" evolution model of Klebaner et al  \cite{KSVHJ2011}, in which a mutation appears
in an established population, and then the two subpopulations compete.
This is treated as a pure jump Markov process $Z^K(t)$ on $\mathbb{Z}_+^2$, whose first component counts
wild type individuals, initially around their carrying capacity, and the second
component counts mutant individuals.  The transition rates are as follows:
$$
\begin{array}{cclcl}
   z &\to& z + (1,0) \quad &\mbox{at rate}& \quad a_1z_1 \\
   z &\to& z + (-1,0) \quad &\mbox{at rate}& \quad z_1\big((z_1/K) + \gamma (z_2/K)\big) \\
   z &\to& z + (0,1) \quad &\mbox{at rate}& \quad a_2z_2 \\
   z &\to& z + (0,-1) \quad &\mbox{at rate}& \quad z_2\big(\gamma (z_1/K) + (z_2/K)\big)
   \end{array}
$$
where $K$ is a large parameter which controls carrying capacities of subpopulations and $a_1$, $a_2$ and $\gamma$ are positive
constants.

Initially, the first component $Z^K_1(0)$ has a value near its carrying capacity~$a_1K$ and no mutants are present in the population,
$Z^K_2 = 0$.  At time zero a number of mutant individuals are introduced.
The mutants and wild type individuals differ only through their birth rates $a_1$ and~$a_2$.
Each species has {\it per capita\/} death rate given by the density
of its own population, together with an additional component of $\gamma$ times the density
of individuals of the other species.  If $\gamma > 1$, members of the other species result
in a higher mortality rate than if they were of the same species; if $\gamma< 1$, they
result in a lower mortality rate, favouring the
possibility of coexistence.  If $a_2 < \gamma a_1$,
the mutants have negligible chance of survival, but, if $a_2 > \gamma a_1$, there is
a positive probability that the mutant strain will become established.  In this case,
if also $a_1 > \gamma a_2$, the two populations will eventually come to coexist.
%

Define the density process  $X_K(t)=Z_K(t)/K$ and let
$$
f(x)= \begin{pmatrix}
a_1x_1-x_1^2-\gamma x_1 x_2   \\
    a_2x_2-x_2^2-\gamma x_1 x_2
\end{pmatrix}.
$$
If $X_K(0)\to x(0)$ as $K\to\infty$, then by Theorem 3.1 in \cite{Kurtz}
$$
\sup_{0\le t\le T}\big|X_K(t) - x(t)\big| \xrightarrow[K\to\infty]{\P} 0,\quad T>0
$$
where $x(t)$ solves the o.d.e. \eqref{detmain} subject to initial condition $x(0)=(a_1,0)$, which is an unstable fixed point of the above deterministic dynamics. Hence the limit trajectory is constant and equals $x(0)$ on any finite interval $[0,T]$.

The heuristics for the approximation in \cite{BHKK15} is as follows.
Initially near $(a_1,0)$, the mutant component $Z^K_2(t)$ is approximated by a linear  Birth-Death process $Y(t)$
with per capita birth rate  $a_2$, death rate $\gamma a_1$ and the corresponding survival rate $\beta = a_2-\gamma a_1$,
which starts from $Y(0):=Z^K(0)$. Since
$Y(t)e^{-\beta t}$ is a martingale  with a non degenerate limit $W$,
\begin{equation}\label{stochapp}
X^K_2(t)=\frac 1 K Z^K_2(t)\approx \frac 1 K Y(t)\approx \frac 1 K  e^{\beta t}W=\frac{1}{K} e^{\beta( t+\beta^{-1}\log W)}.
\end{equation}
On the other hand, linearization of  dynamics \eqref{detmain} governed by $f(\cdot)$ near fixed point $(a_1,0)$ gives
$\dot{x}(t)\approx B x(t),$  where
$
B=\left(\begin{smallmatrix}
-a_1 &  -\gamma a_1 \\
0 & a_2-\gamma a_1
\end{smallmatrix}
\right)
$ is the Jacobian matrix. Hence
$
x(t) \approx e^{B t}x(0)
$
 and
\begin{equation}\label{detapp}
 x_2(t)\approx \frac{1}{K} e^{\beta t}Z^K_2(0)=\frac{1}{K} e^{\beta \big(t+\beta^{-1}\log Z^K_2(0)\big)}=
 \frac{1}{K} e^{\beta (t+\beta^{-1}\log \E W)},
\end{equation}
where we used the martingale property $Y(0)=\E Y(t) e^{-\beta t}=\E W$. Comparing the two approximations in
\eqref{stochapp} and \eqref{detapp}, we conclude that the stochastic path differs from the deterministic one by the random time shift
$\beta^{-1}(\log W- \log \E W).$ This heuristics is made precise by Theorem 1.2 in \cite{BHKK15}, which derives a non-asymptotic
approximation of solution to a certain class of stochastic systems, including the above ``bare bones" model as a special case,
by trajectories of the corresponding o.d.e. shifted by the random quantity as above.

\subsection{Random initial condition}

Fluid approximation on increasing time intervals also leads to differential equations with random
initial conditions. This phenomenon was recently studied in \cite{BCK} in the context of density dependent
populations. Let $Z^K_t$ be a continuous time Birth-Death process on $\mathbb{Z}_+$
with {\it per capita\/} birth rate $\lambda  - (\l-\m)g_1(z/K)$ and death rate $\mu   + (\lambda-\mu)g_2(z/K)$, $z\in \mathbb{Z}_+$,
where $\lambda>\mu\ge 0$ are fixed constants, $g(\cdot) = g_1(\cdot) + g_2(\cdot)$ is an increasing
function with $g(0)=0$ and $g(x_\infty)=1$ for some $x_\infty\in (0,\infty)$.
As before $K$ is a parameter, thought of as being large, that is representative of the  carrying capacity
of the population.

Process $Z^K_t$ admits the decomposition
\begin{equation}
\label{ZKt}
Z^K_t = Z^K_0 +  (\lambda  -\mu)\int_0^t Z^K_s\big(1 -  g (Z^K_s/K)\big)ds + M_t, \quad t\ge 0
\end{equation}
where $M_t$ is a martingale with predictable quadratic variation
$$
\langle M\rangle_t=\int_0^t Z^K_s\Big(\lambda +\mu  - (\l-\m)\big(g_2(Z^K_s/K)-g_1(Z^K_s/K)
\Big)ds.
$$
If we divide both asides of \eqref{ZKt} and define the density process $X^K_t:= \frac 1 K Z^K_t$ we get
$$
X^K_t = X^K_0 +  (\lambda  -\mu)\int_0^t X^K_s\big(1 - g (X^K_s)\big)ds + \frac 1 {\sqrt K}\widetilde M_t, \quad t\ge 0
$$
where the bracket of $\widetilde M_t:= \frac 1{\sqrt{K}}M_t$ depends only on $X^K_t$.
Therefore under appropriate technical conditions Theorem 3.1 from \cite{Kurtz}  implies that the density process $X^K_t$ converges
as $K\to\infty$ to the solution of the o.d.e.
\begin{equation}\label{ode}
   \frac d{dt} x_t =  (\lambda -\mu)x_t\big(1 -g(x_t)\big), \quad t\ge 0,
\end{equation}
subject to $x_0 = \lim_{K\to\infty} X^K_0$. Again, if the initial population size $Z^K_0$
is constant with respect to $K$, and hence the initial limit density is zero, $x_0=0$, the trivial limit
$x_t= 0$, $t\in [0,T]$ is obtained.

The main result in \cite{BCK} asserts that for any $T>0$ and $T_K:= \frac 1 {\lambda-\mu }\log K$
$$
\sup_{t\le T} \big|X^K_{t+T_K}-x_t\big|\xrightarrow[K\to\infty]{\P}0
$$
where $x_t$ solves \eqref{ode} subject to the random initial condition $x_0:= G^{-1}(\log W)$ with
\begin{equation} \label{GG}
G(x) =   \int_{0}^{x} \frac{g(u)}{u(1-g(u))}du + \log x, \quad x\in \Real_+.
\end{equation}
The random variable $W$ is the a.s. limit of the martingale
$$
W_t:= e^{-(\lambda-\mu)t}Y_t\xrightarrow[t\to\infty]{\P} W,
$$
where $Y_t$ is the continuous time Galton-Watson branching with constant per capita rates $\lambda$ and $\mu$, suitably defined
on the same probability space.

\subsection{An application to PCR}

Being motivated so far by theoretical considerations, fluid approximations on growing intervals
can also be of practical interest. One example is an application to
the Polymerase Chain Reaction (PCR) suggested in \cite{CJK}.
The model contains  the Michaelis-Menten constant $K$, large in terms of molecule numbers.
PCR typically starts from a very low concentration of initial DNA strands, which are not observable initially,  but become
observable after a number of iterations. This seems to be consistent with the theory, proposed in \cite{CJK}, which predicts that
DNA concentration raises to a measurable level after the number of repetitions of order $\log K$. Once the reaction becomes observable, the analytic approximation features random initial conditions, which can have practical implications.
Since this model is in discrete time, and uses discrete time techniques, we omit further details here.

\subsection{Contribution of this paper}

As mentioned in Introduction the key element of the program is the realization of coupling to the linear stochastic process.
While the broad term coupling is used, it actually  means different things in different situations.
For multidimensional Birth-Death processes in  \cite{BHKK15},  the coupling  is done by applying an abstract general result of Thorisson \cite{Thor}.   The non-linear and linear processes are coupled trajectories-wise on the set of full probability in the limit. This technique rests on the ability to evaluate total variation distance between  the non-linear and linear processes.
For the one-dimensional Birth-Death processes  in \cite{BCK}, the coupling is done by  constructing two linear processes with constant rates such that the non-linear process is sandwiched between these two. This is done in such a way that in the limit both linear processes and hence the non-linear one, converge to the same limit under appropriate scaling.

The results of the present paper are closer in spirit to those in \cite{BCK}.   Here coupling uses the same driving Brownian motion for the original non-linear process and its approximation with linear drift, which is the Feller branching diffusion \eqref{Ylin}. To show that these processes are close, we use a smooth approximation to the absolute value function, akin to the
Yamada-Watanabe approach to analysis of one-dimensional diffusions with non-smooth coefficients.

Our other contribution of a more conceptual flavour is the identification of the nonlinearity $H(\cdot)$ as the limit of the scaled flow
\eqref{Hxlim}, generated by the differential equation in the fluid approximation:
$$
H(x) =  \lim_{t\to\infty} \phi_t\big(xe^{-f'(0) t}\big).
$$
 In the one-dimensional case this limit admits the closed form expression  $H(x)=G^{-1}(\log x)$ with $G(\cdot)$ being defined in
\eqref{Gdef} below (c.f. \eqref{GG}).
We conjecture that this new charactrization remains valid in higher dimensional models, such as that considered in \cite{BHKK15},
and will prove to be useful in further exploration of the subject.

\section{Proof of Theorem \ref{mainT}}

Without loss of generality we fix the normalization $\sigma'(0)=1$ and denote $a:= f'(0)>0$.
The main step in the proof is to establish  convergence \eqref{newapp} at $t=0$, namely
\begin{equation}\label{t0}
 X^\eps_{T_\eps} \xrightarrow[\eps\to 0]{\P} H(W).
\end{equation}
The rest of the proof follows by a change of time.
Indeed, by letting $\widetilde X^\eps_t=X^\eps_{T_\eps+t}$,\\ and $\widetilde{B}_t = B_{T_\eps + t} - B_{T_\eps}$
we obtain from \eqref{main}
$$
\widetilde X^\eps_t =\widetilde X^\eps_{0} +\int_0^t f(\widetilde X^\eps_s)ds+
 \int_0^t \sqrt{ \eps \sigma(\widetilde X^\eps_s)}d\widetilde{B}_s,
$$
and the result follows from  \eqref{t0} by Theorem \ref{Kurtz}.

The proof of \eqref{t0} consists of a number of steps given as Propositions that follow. First we establish existence
of a nontrivial limiting function $H(\cdot)$, appearing in the random initial condition. Next we consider an auxiliary Feller branching
diffusion and its martingale limit $W$. Then we show convergence of processes on finite intervals under appropriate rescaling.
Finally all these ingredients are assembled together to construct the main approximation, which yields the statement of Theorem \ref{mainT}.

\subsection{The function $H(\cdot)$}

\begin{prop}\label{lemH}
The limit in \eqref{Hxlim} exists, uniformly on compacts, and is given by $H(x)=G^{-1}\big(\tfrac 1 a\log x\big)$ with
\begin{equation}\label{Gdef}
G(x) := \int_0^x \left(\frac 1{f(u)}-\frac 1{au}\right)du + \frac 1 a \log x.
\end{equation}
\end{prop}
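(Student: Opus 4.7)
The plan is to view $G$ as a change of variable that linearises the flow $\phi_t$, which reduces the assertion to an elementary limit computation. First I would check that $G$ in \eqref{Gdef} is well defined near the origin. Since $f\in C^2$ with $f(0)=0$ and $f'(0)=a$, Taylor expansion gives $au-f(u)=O(u^2)$ and $auf(u)=a^2u^2+O(u^3)$, so the integrand
$$\frac{1}{f(u)}-\frac{1}{au}=\frac{au-f(u)}{auf(u)}$$
is bounded on a neighbourhood of $0$. Consequently $G$ is continuous on $[0,u^*)$, with $G(0^+)=-\infty$ and $G'(x)=1/f(x)>0$, where $u^*\in(0,\infty]$ denotes the first positive zero of $f$, or $+\infty$ if none exists.

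The core of the argument is the identity $G(\phi_t(y))=G(y)+t$, obtained by differentiating along the flow: $\frac{d}{dt}G(\phi_t(y))=G'(\phi_t(y))\,f(\phi_t(y))=1$. Since $f>0$ on $(0,u^*)$, any trajectory started in $(0,u^*)$ is strictly increasing and remains in $(0,u^*)$, so the identity is valid for all $t\ge 0$. Substituting $y=xe^{-at}$ and using $\tfrac{1}{a}\log(xe^{-at})=\tfrac{1}{a}\log x-t$, I obtain
$$
G\bigl(\phi_t(xe^{-at})\bigr)=t+G(xe^{-at})=\frac{1}{a}\log x+\int_0^{xe^{-at}}\left[\frac{1}{f(u)}-\frac{1}{au}\right]du.
$$

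Because the integrand on the right is bounded near $0$ and $xe^{-at}\to 0$ uniformly for $x$ in any compact subset of $[0,\infty)$, the remaining integral tends to $0$ uniformly on compacts. Hence $G(\phi_t(xe^{-at}))\to\tfrac{1}{a}\log x$ uniformly on compacts. Since $G$ is a continuous strictly increasing bijection from $(0,u^*)$ onto its image in $\Real$, its inverse $G^{-1}$ is continuous, and applying $G^{-1}$ yields
$$
\phi_t(xe^{-at})\to G^{-1}\bigl(\tfrac{1}{a}\log x\bigr)=H(x),
$$
uniformly on compacts. The boundary case $x=0$ is immediate from $\phi_t(0)\equiv 0$, matching $G^{-1}(-\infty)=0$.

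The only genuinely technical point is verifying that $\phi_t(xe^{-at})$ stays in $(0,u^*)$ for all $t\ge 0$, so that the identity $G(\phi_t(y))=G(y)+t$ remains valid throughout. This is automatic from monotonicity of $\phi_t(\cdot)$ and the fact that $u^*$ (when finite) is a fixed point of \eqref{detmain}, so trajectories initiated below it are trapped below it; when $u^*=+\infty$ the drift condition \eqref{drift} precludes finite-time blow-up, which is again enough. No Gr\"onwall-type estimate is required, and this clean ODE computation is what makes $H$ available in the convenient closed form $H=G^{-1}\circ(\tfrac{1}{a}\log\cdot)$ for use in the subsequent probabilistic analysis.
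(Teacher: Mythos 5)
Your argument is, at its core, the same as the paper's: both proofs rest on the observation that $G$ conjugates the flow to a translation, i.e.\ $G(\phi_t(y))=G(y)+t$, which after the substitution $y=xe^{-at}$ gives $G(\phi_t(xe^{-at}))=\tfrac1a\log x+\int_0^{xe^{-at}}\bigl(\tfrac1{f(u)}-\tfrac1{au}\bigr)du$. The paper reaches this identity more laboriously (it first computes $\partial_x\phi_t(x)=f(\phi_t(x))/f(x)$ and then writes an o.d.e.\ for $h_t=\phi_t(xe^{-at})$), whereas your direct differentiation of $G\circ\phi_t$ is cleaner; the bound on the integrand near $0$ via $au-f(u)=O(u^2)$ is the same estimate the paper makes with explicit constants.

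There is, however, one step you skip that the paper treats as necessary and that your write-up genuinely needs: you only assert that $G$ is a bijection from $(0,u^*)$ \emph{onto its image}, and then apply $G^{-1}$ at the point $\tfrac1a\log x$. For this to be legitimate you must know that $\tfrac1a\log x$ lies in the range of $G$ for every $x>0$, i.e.\ that $\lim_{y\to u^*}G(y)=+\infty$; otherwise the limit of $G(\phi_t(xe^{-at}))$ could equal $\sup G$, in which case $\phi_t(xe^{-at})\to u^*$ and the formula $H=G^{-1}\circ(\tfrac1a\log\cdot)$ would be meaningless at that $x$. The verification is short but not vacuous: when $u^*<\infty$, the Lipschitz bound $f(u)\le L(u^*-u)$ near the zero $u^*$ gives $\int^{u^*}du/f(u)=\infty$; when $u^*=\infty$, the drift condition \eqref{drift} yields $f(u)\le au$, hence $G'(u)=1/f(u)\ge 1/(au)$ and again $G(y)\to\infty$. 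Adding these two lines (as the paper does) closes the gap; everything else in your proposal is sound and matches the paper's level of rigour, including the slightly glossed-over point that uniform convergence survives composition with $G^{-1}$ because $G^{-1}$ is uniformly continuous on the compact intervals where the arguments eventually live.
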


\begin{proof}
Since $f$ is continuously differentiable, the flow $\phi_t(x)$ is differentiable in both variables and the derivative
$\phi_t'(x):= \partial_x \phi_t(x)$, $x>0$  satisfies
$$
\frac d {dt} \phi_t'(x) =  f'(\phi_t(x))\phi_t'(x), \quad t\ge 0
$$
subject to $\phi_0'(x)=1$.
Let $x^*$ be either the positive root of $f$, closest to the origin, or $x^* =\infty$, if $f(x)>0$ for all $x>0$.
Since the interval $(0,x^*)$ is invariant under the flow, we have $f(\phi_t(x))>0$ and therefore
$$
\begin{aligned}
\phi_t'(x) = & \exp\left(\int_0^t f'(\phi_s(x))ds\right)=
\exp\left(\int_0^t \frac{f'(\phi_s(x))}{f(\phi_s(x))}f(\phi_s(x))ds\right) =\nonumber\\
&
\exp\left(\int_0^t \frac{f'(\phi_s(x))}{f(\phi_s(x))}d \phi_s(x)\right)
=\frac{f(\phi_t(x))}{ f(x)} .
\end{aligned}
$$
Further, define $h_t =  \phi_t(xe^{-at})$. Then $h_t$  satisfies
\begin{align*}
h'_t = & \frac {d}{dt} \phi_t(xe^{-at}) = f(h_t) -a xe^{-at} \phi'_t(xe^{-at}) = \\
&
f(h_t) -a xe^{-at} f(\phi_t(xe^{-at}))/ f(xe^{-at}) = \\
&
f(h_t) \Big(1-a xe^{-at} / f(xe^{-at}) \Big).
\end{align*}
Let $t_0\ge 0$ be any point such that $xe^{-at_0}\in (0,x^*)$,
then rearranging and integrating  we get
$$
\int_{t_0}^t \frac{dh_s}{f(h_s)} = \int_{t_0}^t  \bigg(1-a \frac{xe^{-as} }{ f(xe^{-as})} \bigg)ds=
 \int_{xe^{-at}}^{xe^{-at_0}}  \bigg(\frac{1}{au} -\frac{1 }{ f(u)}\bigg)du, \quad t\ge t_0
$$

Since $f(\cdot)$ has bounded second derivative, for all sufficiently small $x>0$
\begin{align*}
\Big|\frac 1{ax}-\frac 1{f(x)}\Big| = & \Big|\frac{f(x)-ax}{ax f(x)}\Big| =
 \frac{1}{ax  f(x) } \Big|\int_0^x \int_0^u f''(v)dv du\Big|\le \\
&
\frac{ \|f''\|_\infty}{a  } \frac {x}{ f(x) } \le
\frac{ \|f''\|_\infty}{a  } \frac {x}{ax - x^2 \|f''\|_\infty}\le C
\end{align*}
with a constant $C$; in particular, the function $x\mapsto \frac 1{f(x)}-\frac 1{ax}$ is integrable at zero and we can define
$G(x)$ as in \eqref{Gdef} for $x\in (0,x^*)$.
This function is continuous and strictly increasing, since $G'(x)=1/f(x)>0$ for $x\in (0,x^*)$ and
\begin{equation}\label{Ght}
G(h_t)  - G(h_{t_0})=  \int_{xe^{-at}}^{xe^{-at_0}} \bigg(\frac{1}{au}-\frac{1 }{ f(u)} \bigg)du.
\end{equation}
For any fixed $c\in (0,x^*)$ we can write
$$
G(x) = \int_0^c \left(\frac 1{f(u)}-\frac 1{au}\right)du +\frac 1 a \log c +\int_c^x  \frac 1{f(u)} du.
$$
Hence when $x^*\in (0,\infty)$, we have  $\lim_{x\to x^*}G(x)=\infty$, since $f$ has bounded second derivative.
Note that drift condition \eqref{drift} implies $f(x)\le ax$ for all $x\ge 0$ and hence $\lim_{x\to x^*}G(x)=\infty$
also when $x^*=\infty$, that is when $f(x)>0$ for all $x>0$. Since we also have $\lim_{x\to 0}G(x)=-\infty$,
$G(\cdot)$ is a bijection from $(0,x^*)$ onto $\Real$ with continuous inverse. Therefore by \eqref{Ght} the limit
$H(x)=\lim_{t\to\infty}h_t(x)$ exists and satisfies
$$
G(H(x))   =  G(h_{t_0})+ \int_{0}^{xe^{-at_0}}  \bigg( \frac{1}{au}-\frac{1 }{ f(u)}\bigg)du,
$$
where the convergence is uniform over $x$ on compacts.  The claim follows, since the right hand side does not depend on the choice of $t_0$:
\begin{align*}
G(H(x))&  =  G(h_{t_0})+ \int_{0}^{xe^{-at_0}}  \bigg(\frac{1}{au}- \frac{1 }{ f(u)} \bigg)du =\\
&
\int_0^{h_{t_0}} \left(\frac 1{f(u)}-\frac 1 {au}\right)du + \frac 1 a\log h_{t_0}+ \int_{0}^{xe^{-at_0}}  \bigg(\frac{1}{au}- \frac{1 }{ f(u)} \bigg)du =\\
&
\int_{xe^{-at_0}}^{h_{t_0}}  \frac 1{f(u)} du
+\frac 1 a  \log x   -t_0=\frac 1 a \log x,
\end{align*}
where the last equality holds by the definition of $h_{t_0}$.
\end{proof}

\begin{rem} Function $H(x)$ satisfies a number of  properties.
\ben
\item
 It is a nontrivial solution of the o.d.e.
$$
H'(x)= \frac 1 {ax} f(H(x)), \quad x>0
$$
with $H(0)=0$. This can be seen directly from the explicit formula $H(x)=G^{-1}\big(\tfrac 1 a\log x\big)$.
%
%
\item $H(x)$ solves Schr\"{o}der's  functional equation
\begin{equation}\label{Sch}
H(x) = \phi_s \circ H (x e^{-as} ), \quad \forall \  x>0, \ s>0.
\end{equation}
Indeed by the semigroup property of the flow
\begin{align*}
\phi_t(xe^{-at}) = \phi_s \circ \phi_{t-s}\ (x e^{-as}e^{-a(t-s)}),
\end{align*}
and \eqref{Sch} is obtained by taking the limit $t\to\infty$ and using continuity.
\een
\end{rem}

\subsection{Feller's branching diffusion}\label{sec-Fbd}

The basic element of the approximation is Feller's branching diffusion
\begin{equation}\label{Feller}
Y_t=1+\int_0^taY_sds+\int_0^t\sqrt Y_s dB_s, \quad t\ge 0,
\end{equation}
driven by the same Brownian motion as in \eqref{main}.
The rescaled process $W_t:=e^{-at} Y_t$ is a nonnegative martingale with a non degenerate almost sure limit
\begin{equation}\label{W}
W:= 1+\int_0^\infty e^{-as} \sqrt{Y_s}dB_s.
\end{equation}
An explicit expression is available for the Laplace transform of $Y_t$ (see, e.g., Lemma 5 page 28 in \cite{Pardoux}):
$$
\E e^{-\lambda Y_t} = \exp \left(-\frac{\lambda a e^{at}}{a+\frac 1 2\lambda (e^{at}-1)}\right), \quad \lambda>0.
$$
Therefore
$$
\E e^{-\lambda W}=\lim_{t\to\infty}\E e^{-\lambda W_t}=
\exp \left(-\frac{2 a\lambda    }{2a+\lambda    }\right) = \E \exp \bigg(-\lambda\sum_{j=0}^\Pi \tau_j\bigg)
$$
with independent random variables $\Pi\sim \mathrm{Poi}(2a)$ and $\tau_j\sim \mathrm{Exp}(2a)$. As we will see,
it is the random variable $W$, which emerges in the limit claimed in Theorem \ref{mainT}.

\subsection{Approximation on bounded intervals}

The following lemma shows that the solution of \eqref{main} converges, under appropriate scaling,
to the Feller branching diffusion \eqref{Feller} on bounded intervals.

\begin{lem} \label{L1}
Let $\overbar X^\eps_t : =\eps^{-1} X^\eps_t$, where $X^\eps_t$ is the solution of \eqref{main} subject to $X^\eps_0=\eps$. Then
$$
\overbar X^\eps_t \xrightarrow[\eps\to 0]{L^1} Y_t, \quad \forall\, t\ge 0,
$$
where $Y_t$ is the solution of \eqref{Feller}.
\end{lem}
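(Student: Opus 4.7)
The plan is a Yamada--Watanabe coupling argument for $Z_t:=\overbar X^\eps_t - Y_t$. Scaling \eqref{main} with $X^\eps_0=\eps$ gives
$$
d\overbar X^\eps_t = \eps^{-1} f(\eps \overbar X^\eps_t)\,dt + \sqrt{\eps^{-1}\sigma(\eps \overbar X^\eps_t)}\, dB_t,\qquad \overbar X^\eps_0=1,
$$
driven by the same $B$ as $Y_t$ in \eqref{Feller}. Using $f(0)=\sigma(0)=0$, $f'(0)=a$, $\sigma'(0)=1$ and bounded second derivatives, Taylor's formula yields $\bigl|\eps^{-1}f(\eps u)-au\bigr|\le\tfrac12\|f''\|_\infty \eps u^2$ and $\bigl|\eps^{-1}\sigma(\eps u)-u\bigr|\le \tfrac12\|\sigma''\|_\infty\eps u^2$, so the coefficients converge pointwise to those of \eqref{Feller}. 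A plain Gronwall argument is unavailable because $u\mapsto\sqrt u$ is only H\"older continuous, which is precisely the setting the Yamada--Watanabe smoothing was designed for.

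As a preliminary I would record the a priori moment bounds $\E\overbar X^\eps_t\le e^{at}$ and $\E(\overbar X^\eps_t)^2\le C(t)$ uniformly in $\eps\in(0,1]$, obtained by applying It\^o's formula and using $f(x)\le ax$ (a consequence of \eqref{drift} at $y=x$, $x=0$) and $\sigma(x)\le\|\sigma'\|_\infty x$; the analogous bounds for $Y_t$ are standard. Next I introduce the usual Yamada--Watanabe approximants $\psi_n\in C^2(\Real)$ to $|\cdot|$, with $\psi_n''(x)=\phi_n(|x|)$, $\mathrm{supp}\,\phi_n\subset[a_n,a_{n-1}]\downarrow\{0\}$, $\int\phi_n=1$ and $\phi_n(x)\le 2/(nx)$; in particular $0\le|x|-\psi_n(x)\le a_{n-1}$ and $\sup\phi_n<\infty$ for each fixed $n$.

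Applying It\^o to $\psi_n(Z_t)$ and taking expectations (the stochastic integral is a genuine martingale since $|\psi_n'|\le1$ and $\E\int_0^t\Sigma(s)^2\,ds<\infty$ for $\Sigma(s):=\sqrt{\eps^{-1}\sigma(\eps\overbar X^\eps_s)}-\sqrt{Y_s}$, as both $\overbar X^\eps_s$ and $Y_s$ have finite first moments), I split the drift integrand as
$$
\psi_n'(Z_s)\bigl[\eps^{-1}f(\eps\overbar X^\eps_s)-aY_s\bigr]=\psi_n'(Z_s)\,\eps^{-1}\bigl[f(\eps\overbar X^\eps_s)-f(\eps Y_s)\bigr]+\psi_n'(Z_s)\bigl[\eps^{-1}f(\eps Y_s)-aY_s\bigr].
$$
The second summand has absolute value $\le\tfrac12\|f''\|_\infty\eps Y_s^2$; the first is bounded by $a|Z_s|$ because the drift condition \eqref{drift} gives $Z_s\,\eps^{-1}[f(\eps\overbar X^\eps_s)-f(\eps Y_s)]\le aZ_s^2$, while $\psi_n'(Z_s)/Z_s\ge 0$ and $|\psi_n'|\le1$. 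For the quadratic variation I use $(\sqrt u-\sqrt v)^2\le|u-v|$ together with Taylor to get $\Sigma(s)^2\le|Z_s|+\tfrac12\|\sigma''\|_\infty\eps(\overbar X^\eps_s)^2$; multiplied by $\phi_n(|Z_s|)$ this contributes $\phi_n(|Z_s|)|Z_s|\le 2/n$ plus an error at most $\sup\phi_n\cdot\eps(\overbar X^\eps_s)^2$ whose expectation is $\eps C_n(s)$ with $C_n(s)<\infty$ for each fixed $n$.

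Assembling everything, $\E\psi_n(Z_t)\le a\int_0^t\E|Z_s|\,ds + t/n + \eps K_n(t)$ for some $K_n(t)<\infty$. Substituting $\E|Z_s|\le\E\psi_n(Z_s)+a_{n-1}$ on the right and applying Gronwall yields $\E|Z_t|\le a_{n-1}+(t/n+at\,a_{n-1}+\eps K_n(t))e^{at}$; sending first $\eps\to0$ with $n$ fixed and then $n\to\infty$ gives $\E|Z_t|\to 0$, which is the required $L^1$ convergence. The main difficulty, which this two-scale limit resolves, is that the Yamada--Watanabe bound $\phi_n(|Z|)|Z|\le 2/n$ absorbs only one factor of $|Z|$ and cannot handle the quadratic Taylor remainder $\eps(\overbar X^\eps_s)^2$ coming from the linearisation of $\sigma$: this remainder must be paired with $\sup\phi_n$, which is finite only for fixed $n$ and must be killed by $\eps\to 0$ before $n$ is sent to $\infty$. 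The uniform second moment bound on $\overbar X^\eps_s$ is exactly what makes $K_n(t)$ finite and allows this order of limits.
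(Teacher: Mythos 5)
Your proposal is correct and follows essentially the same route as the paper: coupling $\overbar X^\eps$ to $Y$ through the common Brownian motion, the same a priori moment bounds, and a Yamada--Watanabe smoothing of $|\cdot|$ combined with the H\"older bound $(\sqrt u-\sqrt v)^2\le|u-v|$, Taylor remainders of order $\eps(\overbar X^\eps_s)^2$, and Gr\"onwall. The only (harmless) difference is that the paper ties the smoothing parameter to $\eps$ itself (cutoffs $\eps$ and $\sqrt\eps$, giving a $1/\log(1/\sqrt\eps)$ error and a single limit), whereas you keep an independent index $n$ and take the iterated limit $\eps\to0$ then $n\to\infty$.
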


\begin{proof}

The process $\overbar X^\eps_t$ satisfies
\begin{equation}\label{barX}
 \overbar X^\eps_t = 1+ \int_0^t \eps^{-1} f(\eps \overbar X^\eps_s) ds + \int_0^t \sqrt{\eps^{-1} \sigma(\eps \overbar X^\eps_s)} dB_s,
\end{equation}
First let us show that the moments of $\overbar X^\eps_t$ are bounded, uniformly in $\eps$ on any finite time interval.
By drift condition \eqref{drift}, $f(x)\le ax$ for all $x\ge 0$ and the standard localization of the stochastic integral
$$
\E \overbar X^\eps_t =  1+ \E\int_0^t \eps^{-1} f(\eps \overbar X^\eps_s) ds   \le 1+ \int_0^t a   \E \overbar X^\eps_s  ds,
$$
and, in turn, by Gr\"{o}nwall's inequality
\begin{equation}\label{mom1}
\E \overbar X^\eps_t\le e^{at}, \quad \forall t>0.
\end{equation}
Now define $W^\eps_t := e^{-at}\overbar{X}^\eps_t$:
\begin{equation}\label{Wet}
\begin{aligned}
W^\eps_t = &
1 + \int_0^t e^{-as}\Big(\eps^{-1} f(\eps \overbar X^\eps_s)-a \overbar{X}^\eps_s\Big) ds + \int_0^t e^{-as}\sqrt{\eps^{-1} \sigma(\eps \overbar X^\eps_s)} dB_s \le \\
&
1  + \int_0^t e^{-as}\sqrt{\eps^{-1} \sigma(\eps \overbar X^\eps_s)} dB_s
\end{aligned}
\end{equation}
where the inequality holds since $\eps^{-1} f(\eps x)\le ax$. Since $W^\eps_t$ is positive,
$$
\E (W^\eps_t)^2 \le 1  + \|\sigma'\|_\infty\int_0^t e^{-2 as}  \E   \overbar X^\eps_s  ds\le 1+\frac {\|\sigma'\|_\infty} a,
$$
where we used \eqref{mom1}. Consequently
\begin{equation}\label{m2}
\E(\overbar X^\eps_t)^2\le \Big(1+\frac{\|\sigma'\|_\infty}{a}\Big)e^{2at}=:m_2(t).
\end{equation}

Further, for any $\eps\in (0,1)$ let
$$
\psi_\eps(u) :=\frac{1}{u\log(1/\sqrt\eps)}\one{\eps\le u\le \sqrt{\eps}}
$$
and define
$$
h_\eps(x) :=\int_0^{|x|}\int_0^y\psi_\eps(u)dudy, \quad x\in \Real.
$$
Obviously,
$$
h'_\eps(x) =\int_0^x\psi_\eps(u)du =  \begin{cases}
0 & 0<x\le \eps \\
\dfrac{\log (x/\eps)}{ \log(1/\sqrt\eps)} & \eps< x\le \sqrt{\eps} \\
1 & x> \sqrt{\eps}
\end{cases}
$$
and hence $|h'_\eps(x)|\le 1$. Since $h_\eps(x)$ is symmetric around zero,
$h'_\eps(x)\ge \one{x\ge \sqrt{\eps}}$ for $x>0$ and $h_\eps(0)=0$ we also have
$$
|x|\le h_\eps(x)+\sqrt{\eps}.
$$
Function $h_\eps(x)$ is a smooth approximation of $|x|$, used in the proof of Yamada-Watanabe theorem
and related applications (see, e.g., \cite{Gyongy}).

Note that from equations \eqref{Feller} and \eqref{barX} the difference
$Z^\eps_t= Y_t-\overbar X^\eps_t$ satisfies

$$
dZ^\eps_t =  a Z^\eps_t dt
-  \big(\eps^{-1} f(\eps \overbar X^\eps_t)-a  \overbar X^\eps_t\big) dt
+   \Big(\sqrt Y_t-\sqrt{\eps^{-1} \sigma(\eps \overbar X^\eps_t)}\Big) dB_t
$$
subject to $Z^\eps_0=0$.
Now we apply It\^o's formula to $h_{\eps}(Z_t^\eps)$ to get
\begin{equation}\label{Z}
|Z^\eps_t|\le \sqrt \eps + h_{\eps}(Z^\eps_t)=\sqrt \eps+\int_0^t I_{\eps} (s)ds+\frac{1}{2}\int_0^t J_{\eps} (s)ds+M_{ \eps }(t),
\end{equation}
where
\begin{align*}
I_{\eps} (t) := &
h'_\eps(Z^\eps_t) \Big( a Z^\eps_t   -  \big(\eps^{-1} f(\eps \overbar X^\eps_t)-a  \overbar X^\eps_t\big) \Big)\\
J_{ \eps }(t):= &  h''_{\eps}(Z^\eps_s) \Big(\sqrt Y_t-\sqrt{\eps^{-1} \sigma(\eps \overbar X^\eps_t)}\Big)^2 \\
M_{ \eps }(t):= & \int_0^th'_{\eps}(Z^\eps_s)\Big(\sqrt Y_s-\sqrt{\eps^{-1} \sigma(\eps \overbar X^\eps_s)}\Big)dB_s.
\end{align*}
Since $|h'_\eps(x)|\le 1$ and $|\eps^{-1} f(\eps x)-a  x|\le \eps\|f''\|_\infty  x^2$, the first term satisfies
$$
|I_{\eps} (s)|\le a|Z^\eps_s|+\eps \|f''\|_\infty(\overbar X^\eps_s)^2.
$$
Further, since $\sqrt x$ has global H\"older  exponent of $1/2$,
\begin{equation}\label{Hol2}
\big(\sqrt y-\sqrt{\eps^{-1} \sigma(\eps x)}\big)^2\le |y-\eps^{-1} \sigma(\eps x)|\le
|y-x|+\eps\|\sigma''\|_\infty x^2, \quad x,y\in \Real_+
\end{equation}
and therefore, using the estimate for $h''_\eps(x)$, we get
\begin{align*}
\big|J_{ \eps }(t)\big|\le\,
&
\big|h''_{\eps}(Z^\eps_s)\big| \Big(\sqrt Y_t-\sqrt{\eps^{-1} \sigma(\eps \overbar X^\eps_t)}\Big)^2 \le \\
&
\frac{1}{|Z^\eps_t|\log(1/\sqrt\eps)}
 \one{\eps\le |Z^\eps_t|\le \sqrt{\eps}}
\Big( \big|  Z^\eps_t \big|+  \eps\|\sigma''\|_\infty (\overbar X^\eps_t)^2 \Big) \le\\
&
\frac{1}{ \log(1/\sqrt\eps)}   \Big(1+\|\sigma''\|_\infty(\overbar X^\eps_t)^2 \Big).
\end{align*}
Similarly  by \eqref{Hol2}, the quadratic variation of $M_{\eps }$ is bounded by
\begin{align*}
\left<M_{\eps } \right>_t= &\,
\int_0^t \big( h'_{\eps}(Z^\eps_s)\big)^2\Big(\sqrt Y_s-\sqrt{\eps^{-1} \sigma(\eps \overbar X^\eps_s)}\Big)^2ds\le  \int_0^t |Z^\eps_s| ds
+\eps  \int_0^t (\overbar X^\eps_s)^2ds\\
&
\le  \int_0^t (Y_s+X^\eps_s) ds+\eps \|\sigma''\|_\infty \int_0^t (\overbar X^\eps_s)^2ds.
\end{align*}
Since the moments of $Y_t$ and $X^\eps_t$ are bounded on $[0,T]$ uniformly over $\eps$,  the latter implies
$$
\E\left<M_{\eps } \right>_t\le C_T<\infty,
$$
with a constant $C_T$ which depends only on $T$.
In particular, $M_{\eps }$ is a square integrable martingale with zero mean. Now taking expectation in \eqref{Z} and using the above bounds, we obtain
\begin{align*}
\E |Z^\eps_t|\le\, &  \sqrt \eps+a\int_0^t \E |Z^\eps_s|ds+\eps \|f''\|_\infty\int_0^t\E (\overbar X^\eps_s)^2 ds+
\\
&
\frac{1}{ \log(1/\sqrt\eps)}\int_0^t
   \Big(1+\|\sigma''\|_\infty\E (\overbar X^\eps_s)^2 \Big)
ds.
\end{align*}
Estimate \eqref{m2} and Gr\"{o}nwall's inequality imply $\E|Z^\eps_t|\to 0$ as $\eps\to 0$, which completes the proof.
\end{proof}

\subsection{The approximation}
The crux of our proof is an approximation of $X^\eps_t$ by means of a deterministic extrapolation of its trajectory
onwards from a certain suitably chosen time point.
To this end let us introduce deterministic and stochastic flows $\phi_{s,t}(x)$ and $\Phi_{s,t}(x)$ generated by
o.d.e. \eqref{detmain} and s.d.e. \eqref{main} respectively, i.e. the solutions of these equations at time $t$ that start at $x$ at
time $s$.
Further, let $\displaystyle t_c=\frac{c}{a} \log\frac 1 \eps$ with any constant $c\in (1/2,1)$ and  $t_1=T_\eps=\displaystyle\frac{1}{a} \log\frac 1 \eps$.
By these definitions
$X^\eps_{T_\eps} =\Phi_{t_c,t_1}(X^\eps_{t_c})$
and
\begin{equation}\label{decomp1}
 X^\eps_{T_\eps} =\big( \Phi_{t_c,t_1}(X^\eps_{t_c}) - \phi_{t_c, t_1}(X_{t_c}^\eps)\big)+ \phi_{t_c, t_1}(X_{t_c}^\eps).
\end{equation}
The convergence in \eqref{t0} holds once we check that the first term vanishes as $\eps\to 0$ and the second converges to $H(W)$
with the random variable $W$ from \eqref{W}.

\begin{lem}
$$
  \Phi_{t_c,t_1}(X^\eps_{t_c}) - \phi_{t_c, t_1}(X^\eps_{t_c})    \xrightarrow[\eps \to 0]{L^2} 0.
$$
\end{lem}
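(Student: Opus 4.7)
The plan is to compare the stochastic flow $\Phi_{t_c,t}(X^\eps_{t_c})$ and the deterministic flow $\phi_{t_c,t}(X^\eps_{t_c})$, \emph{both launched from the same} random point $X^\eps_{t_c}$, via an $L^2$ energy estimate driven by the drift condition \eqref{drift}. Set
\begin{equation*}
D_t := \Phi_{t_c,t}(X^\eps_{t_c}) - \phi_{t_c,t}(X^\eps_{t_c}), \qquad t\in [t_c,t_1],
\end{equation*}
so that $D_{t_c}=0$ and
\begin{equation*}
dD_t = \bigl(f(\Phi_{t_c,t}(X^\eps_{t_c})) - f(\phi_{t_c,t}(X^\eps_{t_c}))\bigr)\,dt + \sqrt{\eps\,\sigma(\Phi_{t_c,t}(X^\eps_{t_c}))}\,dB_t.
\end{equation*}

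I would apply It\^o's formula to $D_t^2$ (after a standard localization, justified by the fact that both flows have finite moments on the bounded interval $[t_c,t_1]$). The cross term is handled by rewriting \eqref{drift} as $D_t\bigl(f(\Phi_{t_c,t}(X^\eps_{t_c}))-f(\phi_{t_c,t}(X^\eps_{t_c}))\bigr)\le a\,D_t^2$, while the It\^o correction is controlled by the uniform bound on $\sigma$. Taking expectation — the stochastic integral is a true martingale since its integrand is bounded — yields
\begin{equation*}
\E D_t^2 \le 2a\int_{t_c}^t \E D_s^2\,ds + \eps\,\|\sigma\|_\infty\,(t-t_c),
\end{equation*}
and Gr\"onwall's inequality delivers
\begin{equation*}
\E D_{t_1}^2 \le \eps\,\|\sigma\|_\infty\,(t_1-t_c)\,e^{2a(t_1-t_c)}.
\end{equation*}

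The one delicate point, and the step I would treat most carefully, is checking that this bound actually vanishes. Substituting $t_1-t_c=\tfrac{1-c}{a}\log(1/\eps)$ turns the right-hand side into a constant multiple of $\eps^{2c-1}\log(1/\eps)$, which tends to zero precisely because the splitting parameter satisfies $c\in(1/2,1)$. The restriction $c>1/2$ is therefore not cosmetic but essential: it is exactly what is needed so that the single power of $\eps$ coming from the diffusion coefficient beats the Gr\"onwall blow-up $e^{2a(t_1-t_c)}=\eps^{-2(1-c)}$. Crucially, this mechanism works only because both flows are started from the \emph{same} point $X^\eps_{t_c}$, so there is no initial mismatch to be amplified by the exponential factor.
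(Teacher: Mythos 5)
Your proposal is correct and follows essentially the same route as the paper: the same $L^2$ energy estimate via It\^o's formula applied to the squared difference of the two flows started from the common point $X^\eps_{t_c}$, the drift condition \eqref{drift} to control the cross term, the uniform bound on $\sigma$ for the It\^o correction, and Gr\"onwall's inequality yielding a bound of order $\eps^{2c-1}\log(1/\eps)$, which vanishes precisely because $c\in(\tfrac12,1)$. The only cosmetic difference is that the paper shifts time so the comparison runs over $[0,t_1-t_c]$; your observation that $c>1/2$ is the essential, non-cosmetic constraint matches the paper's reasoning exactly.
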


\begin{proof}
Let $\Phi^\eps_t :=  \Phi_{t_c,t_c+t }(X^\eps_{t_c})$ and $\phi_t := \phi_{t_c,t_c+t }(X^\eps_{t_c})$ for brevity
and define  $\delta^\eps_t = \Phi^\eps_t-\phi_t$.  Subtracting equations \eqref{main} and \eqref{detmain}  and applying the It\^o formula:
\begin{align*}
\E \big(\delta^\eps_t\big)^2  =\, &  \E \int_0^t 2\delta_s \big(f(\Phi^\eps_s) - f(\phi_s)\big)ds
+\int_0^t \eps\E  \sigma(\Phi^\eps_s) ds  \le \\
&
 \int_0^t 2a\E (\delta_s)^2  ds  +  \eps t \|\sigma\|_\infty
\end{align*}
where we used assumption \eqref{drift}.
By Gr\"{o}nwall's inequality
\begin{align*}
\E \Big(\Phi_{t_c,t_1}(X^\eps_{t_c}) - \phi_{t_c, t_1}(X^\eps_{t_c}) \Big)^2 =\;  &
\E \big(\delta^\eps_{t_1-t_c}\big)^2 \le \\
&
 C_1 \eps t_1 e^{2a(t_1-t_c)}\le
C_2  \eps^{2 c-1} \log \frac 1 \eps \xrightarrow[\eps\to 0]{} 0
\end{align*}
where the convergence holds by the choice $c\in (\frac 1 2, 1)$.
\end{proof}

The next lemma establishes convergence of the second term in \eqref{decomp1}:

\begin{lem}
$$
\phi_{t_c, t_1}(X_{t_c}^\eps)  \xrightarrow[\eps \to 0]{\P} H(W).
$$
\end{lem}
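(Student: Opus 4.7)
The plan is to reduce the claim to showing that $W^\eps_{t_c}\to W$ in probability, where $W^\eps_t := e^{-at}\overbar X^\eps_t$, and then invoke the uniform convergence from Proposition \ref{lemH}. The crucial algebraic observation is that, with $u := t_1-t_c = \tfrac{1-c}{a}\log\tfrac1\eps$, we have $e^{-au}=\eps^{1-c}$, so
\[
X^\eps_{t_c} \;=\; \eps\,\overbar X^\eps_{t_c} \;=\; \eps\cdot e^{at_c} W^\eps_{t_c} \;=\; \eps^{1-c} W^\eps_{t_c} \;=\; e^{-au}\,W^\eps_{t_c}.
\]
Thus $\phi_{t_c,t_1}(X^\eps_{t_c})=\phi_u\big(W^\eps_{t_c}\,e^{-au}\big)$, which has exactly the form appearing in the defining limit $H(x)=\lim_{u\to\infty}\phi_u(xe^{-au})$.

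The main step is then to prove $W^\eps_{t_c}\xrightarrow{\P} W$. I would split
\[
W^\eps_{t_c}-W \;=\; \bigl(W^\eps_{t_c}-W^\eps_T\bigr)+\bigl(W^\eps_T-W_T\bigr)+\bigl(W_T-W\bigr),
\]
for a large but fixed auxiliary time $T$. The third term vanishes a.s.\ as $T\to\infty$ by the martingale convergence for $W_t=e^{-at}Y_t$ recalled in Section \ref{sec-Fbd}. The middle term vanishes as $\eps\to0$ for each fixed $T$: by Lemma \ref{L1}, $\overbar X^\eps_T\to Y_T$ in $L^1$, hence $W^\eps_T=e^{-aT}\overbar X^\eps_T\to W_T$ in $L^1$. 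The first term is the delicate one. From the SDE for $\overbar X^\eps_t$ and It\^o's formula,
\[
W^\eps_{t_c}-W^\eps_T \;=\; \int_T^{t_c} e^{-as}\bigl(\eps^{-1}f(\eps\overbar X^\eps_s)-a\overbar X^\eps_s\bigr)ds + \int_T^{t_c} e^{-as}\sqrt{\eps^{-1}\sigma(\eps\overbar X^\eps_s)}\,dB_s.
\]
Using $|\eps^{-1}f(\eps x)-ax|\le\tfrac12\|f''\|_\infty\eps x^2$ and the bound $\E(\overbar X^\eps_s)^2\le m_2(s)\lesssim e^{2as}$ from Lemma \ref{L1}, the drift integral has $L^1$ norm $O(\eps e^{at_c})=O(\eps^{1-c})\to 0$. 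For the stochastic integral, using $\eps^{-1}\sigma(\eps x)\le x+\tfrac12\|\sigma''\|_\infty\eps x^2$ and $\E\overbar X^\eps_s\le e^{as}$, its second moment is bounded by $\tfrac1a e^{-aT}+C\eps\log(1/\eps)$. So $\eps\to 0$ first kills the second piece, and $T\to\infty$ kills the first, uniformly in $\eps$. Taking $T$ large then $\eps$ small thus makes all three terms small, giving $W^\eps_{t_c}\xrightarrow{\P}W$.

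To close the argument, I would use that $\E(W^\eps_t)^2$ is bounded uniformly in $t,\eps$ (as established during the proof of Lemma \ref{L1}), so $\{W^\eps_{t_c}\}$ is tight. Given $\delta>0$, choose a compact $K\subset\Real_+$ with $\P(W\notin K)<\delta$ and $\P(W^\eps_{t_c}\notin K)<\delta$ uniformly in $\eps$. On the event $\{W,W^\eps_{t_c}\in K\}$, the uniform-on-compacts convergence in Proposition \ref{lemH} gives $\phi_u(W e^{-au})\to H(W)$, and, combined with $W^\eps_{t_c}\to W$ and the continuous dependence of $\phi_u(\cdot\,e^{-au})$ on its argument (whose effective Lipschitz constant is $f(H(z))/(az)$, bounded on compacts bounded away from $0$), also $\phi_u(W^\eps_{t_c}e^{-au})\to H(W)$. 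On the complementary part where $W$ is near $0$, the pointwise bound $\phi_u(xe^{-au})\le x$ (immediate from $f(y)\le ay$) keeps things small. I expect the main obstacle to be the careful bookkeeping in step~2, precisely balancing the $\eps^{1-c}$ drift error (which forces $c<1$) against the stochastic error that is controlled only in $T$ (which forces $c>1/2$ to keep $t_c\to\infty$ fast enough for $W^\eps_{t_c}$ to see the martingale limit); the range $c\in(1/2,1)$ chosen earlier is exactly what makes both pieces work.
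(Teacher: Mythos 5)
Your proposal is correct and follows the paper's overall strategy (reduce everything to $W^\eps_{t_c}\xrightarrow{\P}W$ via the identity $X^\eps_{t_c}=W^\eps_{t_c}e^{-a(t_1-t_c)}$, then invoke the uniform-on-compacts convergence of Proposition \ref{lemH}), but the way you establish $W^\eps_{t_c}\to W$ is genuinely different. The paper writes $W-W^\eps_{t_c}=I_1+I_2+I_3$ directly from the two integral representations: a tail term $\int_{t_c}^\infty e^{-as}\sqrt{Y_s}\,dB_s$, the drift error on $[0,t_c]$, and the difference of the \emph{two} diffusion integrals on all of $[0,t_c]$; the last term is handled by the It\^o isometry, the H\"older bound $(\sqrt y-\sqrt{\eps^{-1}\sigma(\eps x)})^2\le|y-x|+\tfrac\eps2\|\sigma''\|_\infty x^2$, the pointwise-in-$s$ $L^1$ convergence of Lemma \ref{L1}, and dominated convergence with an integrable envelope on $\Real_+$. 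You instead insert a fixed intermediate time $T$ and split $W^\eps_{t_c}-W$ into $(W^\eps_{t_c}-W^\eps_T)+(W^\eps_T-W_T)+(W_T-W)$: this uses Lemma \ref{L1} only at the single time $T$, replaces the dominated-convergence step by a second-moment tail bound $\tfrac1a e^{-aT}+C\eps\log(1/\eps)$ for the stochastic integral over $[T,t_c]$ together with the martingale convergence $W_T\to W$, and never needs to compare the two diffusion coefficients inside a stochastic integral. Your estimates check out ($\eps^{-1}\sigma(\eps x)\le x+\tfrac12\|\sigma''\|_\infty\eps x^2$ and the moment bounds \eqref{mom1}, \eqref{m2} give exactly the rates you state), and your final passage to $H(W)$ is, if anything, more careful than the paper's one-line appeal to Proposition \ref{lemH}, since you handle the neighbourhood of $0$ separately via $\phi_u(xe^{-au})\le x$. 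One small inaccuracy in your closing remark: within this lemma the constraint $c>1/2$ plays no role (any $c\in(0,1)$ keeps $t_c\to\infty$ and the drift error $\eps^{1-c}\to 0$); the lower bound $c>1/2$ is needed in the \emph{preceding} lemma, where the Gr\"onwall bound produces the factor $\eps^{2c-1}\log\tfrac1\eps$.
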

\begin{proof}
First we  show that   $W^\eps_{t_c}  =  e^{-at_c} \overbar{X}^\eps_{t_c}$
converges  in $L^2$ as $\eps\to 0$ to the limit $W$ from \eqref{W}, associated with the Feller branching diffusion.
Indeed, using representations \eqref{W} and \eqref{Wet}
\begin{align*}
W-W^\eps_{t_c}:= &
\int_{t_c}^\infty e^{-as} \sqrt{Y_s}dB_s - \int_0^{t_c} e^{-as}\Big(\eps^{-1} f(\eps \overbar X^\eps_s)-a \overbar{X}^\eps_s\Big) ds \\
&
+\int_0^{t_c} e^{-as} \Big(\sqrt{Y_s} -  \sqrt{\eps^{-1} \sigma(\eps \overbar X^\eps_s)}\Big) dB_s =: I_1(\eps)+I_2(\eps)+I_3(\eps).
\end{align*}
The first term converges to zero in $L^2$
$$
\E I_1(\eps)^2=\E\left(\int_{t_c}^\infty e^{-as}\sqrt Y_s dB_s\right)^2=\int_{t_c}^\infty e^{-2as}  \E Y_s ds\rightarrow 0,
$$
since for the Feller branching diffusion we have $\E Y_s=e^{as}$.
The second term converges to zero in $L^1$:
\begin{align*}
\E |I_2(\eps)| \le \int_0^{t_c} e^{-as}\eps \|f''\|_\infty \E (\overbar{X}^\eps_s)^2 ds\le
C_3 \eps  \int_0^{t_c} e^{as}   ds
\le C_4 \eps^{1-c}\xrightarrow[\eps\to 0]{}0.
\end{align*}

For the last term, we have
\begin{align*}
\E (I_3(\eps))^2=\, & \E\left(\int_0^{t_c}e^{-as}\Big(\sqrt{Y_s} -  \sqrt{\eps^{-1} \sigma(\eps \overbar X^\eps_s)}\Big)dB_s\right)^2= \\
&
\int_0^{t_c}e^{-2as}\E\Big(\sqrt{Y_s} -  \sqrt{\eps^{-1} \sigma(\eps \overbar X^\eps_s)}\Big)^2ds.
\end{align*}
Now,
$$\big(\sqrt{y}-\sqrt{\varepsilon^{-1}\sigma(\varepsilon x)}\big)^2\le |y-\varepsilon^{-1}\sigma(\varepsilon c)| \le |y-x| + |x-\varepsilon^{-1}\sigma(\varepsilon x)| \le |y-x| + \frac\varepsilon2\|\sigma''\|_\infty x^2$$
where the first inequality holds since the square root function is H\"older, and the last one is true because $\sigma'(0)=1$ was assumed (w.l.o.g.). Combining these two identities, we get

$$\E (I_3(\eps))^2 \le \int_0^{t_c}e^{-2as}\Big(\E|Y_s-\overbar{X}^\eps_s|+\frac\eps2 \|\sigma''\|_\infty \E(\overbar{X}^\eps_s)^2 \Big)ds   =:\int_0^{\infty}\one{s\le t_c}g_\eps(s)ds.$$

Since $\overbar X^\eps_s$ converges to $Y_s$ in $L^1$ by Lemma \ref{L1}, it follows that $g_\eps(s)\to 0$ as $\eps\to 0$
for any fixed $s\ge 0$. Further, exponential bounds on the moments \eqref{mom1} and \eqref{m2} imply
\begin{align*}
\one{s\le t_c} g_\eps(s)\le\, & C \one{s\le t_c}
e^{-2as}\Big( e^{as}+\eps e^{2as} \Big)
=
C \one{s\le t_c}   e^{-2as}\Big( e^{as}+e^{-(a/c) t_c} e^{ 2as} \Big)\le \\
&
C \one{s\le t_c}   e^{-2as}\Big( e^{as}+e^{-(a/c) s} e^{ 2as} \Big)\le
C     \Big( e^{-as}+  e^{ -(a/c)\, s} \Big),
\end{align*}
which is integrable on $\Real_+$. Hence by the dominated convergence the above integral converges to zero.
Thus $W^\eps_{t_c}$ converges to $W$ in $L^1$ as $\eps\to 0$.

Finally, by definition of $t_1$ and  $t_c$
$$
\phi_{t_c, t_1}(X_{t_c}^\eps)= \phi_{t_c, t_1}\big(W^\eps_{t_c} e^{-a(t_1-t_c)}\big)=\phi_{ t_1-t_c}\big(W^\eps_{t_c} e^{-a(t_1-t_c)}\big)\xrightarrow[\eps\to 0]{\P} H(W),
$$
where the limit holds by the uniform convergence from Proposition \ref{lemH}.
\end{proof}

\section*{Acknowledgments}
We are grateful to the referee for useful suggestions that improved the paper.
Research was supported by the Australian Research Council Grant DP150103588.
F.C. Klebaner is grateful to Professor Bohdan Maslowski of the Department of Probability and Mathematical Statistics, Charles University, Prague, for his hospitality.


\end{document}